\newtheorem{theorem}{Theorem}[section]
\newtheorem{lemma}{Lemma}[section]
\newtheorem{conjecture}{Conjecture}
\newtheorem{remark}{Remark}[section]
\begin{document}

\title[Proof of the conjecture]{Proof of the conjecture of Keskin, Siar and Karaatli}

\author[R. Boumahdi]{Rachid Boumahdi}
\address{{Laboratoire d'Arithm\'{e}tique, Codage, Combinatoire et Calcul Formel}\\
{Universit\'{e} des Sciences et Technologies Houari Boumedi\`{e}ne}\\
{16024, El Alia, Alger, Alg\'{e}rie}\\}
\email{r\_boumehdi@esi.dz}

\author[O. Kihel]{Omar Kihel}
\address{Department of Mathematics, Brock University, Ontario, Canada L2S 3A1}
\email{okihel@brocku.ca}

\author[S. Mavecha]{Sukrawan Mavecha}
\address{Department of Mathematics, Faculty of Science, King Mongkut's Institute of Technology Ladkrabang, Bangkok, 10520, Thailand}
\email{ktsukraw@kmitl.ac.th}
\thanks{
{\it 2010 Mathematics Subject Classification}  : 11A07; 11A15; 11D09}
\thanks{\footnotesize{\it Key words and
phrases} : Diophantine equations, Pell equations, Fundamental solution.
}
\maketitle



\begin{abstract}
In this paper among other results, we will prove the conjecture of Keskin, Siar and Karaatli on the Diophantine equation $x^{2}-kxy+y^{2}-2^{n}=0$.
\end{abstract}

\section{Introduction}

There have been much recent interest in the Diophantine equation
\begin{equation}\label{eq1}
x^{2}-kxy+y^{2}+lx=0
\end{equation}
for different values of the integers $k$ and $l.$ Marlewski and Zarzycki \cite{M-Z}, considered equation (\ref{eq1}) for $l=1$,
and proved that equation (\ref{eq1}) has no positive solution for $l=1$ and $
k>3$, but has an infinite number of solutions for $k=3$ and $l=1$. Keskin et al. in  \cite{K-K-SS} and \cite{K-K-S} considered equation (\ref{eq1}) for $l=-1$ and proved that it has positive integer solutions for $k>1.$ Yuan and Hu \cite{Y-H} considred equation (\ref{eq1}), with $
l=2$ or $4$ and determined the values of the integer $k$ for which equation
(\ref{eq1}) has an infinite number of positive solutions. Expanding
on the work of Yuan and Hu \cite{Y-H}, Keskin et al. in \cite{K-K-SS} and \cite{K-K-S}  considered equation (\ref{eq1}) for $l=\pm 2^{r},$ with $r$ a positive integer. They explained that in order to
determine when equation (\ref{eq1}) with $l= - 2^{r},$ has an infinite number of
positive integer solutions, one needs only to determine when the diophantine
equation
\begin{equation}\label{eq2}
x^{2}-kxy+y^{2}-2^{n}=0
\end{equation}
has an  infinite number of positive integer solutions $x$ and $y$ for certain values of the non
negative integer $n$. Similarly for $l=2^{r}$ in equation (\ref{eq1}), one needs
only to consider the diophantine equation
\begin{equation}\label{eq3}
x^{2}-kxy+y^{2}+2^{n}=0.
\end{equation}%

\noindent Keskin et al. solved equation (2) and equation (3) for $
0\leq n\leq 10$, and formulated the following conjecture.

\begin{conjecture}

(i) let $n$ be an odd integer and $n>2.$ If $k>2^{n}-2$ then equation (\ref{eq2})
has no positive integer solution. If $k \leq 2^{n} - 2$ and (\ref{eq2}) has a solution, then $k$ is even.\\
\noindent (ii) Let $n$ be an  even integer. If $k>2^{n}-2$, then  equation (\ref{eq2}) has no positive odd integer solution. If $k\leq 2^{n}-2$ and equation (\ref{eq2}) has a positive odd integer solution, then $k$ is even.
\end{conjecture}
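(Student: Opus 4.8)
The plan is to treat (\ref{eq2}) as a quadratic in one variable and run a Vieta-jumping (root-flipping) descent on a minimal solution, collapsing everything to a single normalized identity. First I would use the symmetry of the form $x^{2}-kxy+y^{2}$ in $x$ and $y$ to assume $x\ge y\ge 1$, and among all the relevant positive solutions (all positive solutions in case (i); all solutions with $x,y$ both odd in case (ii)) fix one minimizing $x+y$. Viewing the equation as $t^{2}-(ky)t+(y^{2}-2^{n})=0$, the second root is $x'=ky-x\in\mathbb{Z}$, with $xx'=y^{2}-2^{n}$. Minimality forces $x'\le 0$: if $x'\ge 1$ then $(x',y)$ is again admissible, and when $y^{2}>2^{n}$ one checks $x'=(y^{2}-2^{n})/x\le y-2^{n}/y<y\le x$, contradicting minimality; the borderline $x'=0$ would force $y^{2}=2^{n}$, which is impossible since in both cases $2^{n}$ is not the square of an admissible $y$ (for $n$ odd it is not a square at all, and for $n$ even $2^{n/2}$ is even, hence excluded). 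Writing $d:=x-ky\ge 1$ and substituting $x=ky+d$ collapses the equation to the clean identity
\[
y^{2}+kyd+d^{2}=2^{n},\qquad y\ge 1,\ d\ge 1.
\]

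From this identity the first assertion of each part is immediate: since $y^{2}+d^{2}\ge 2$ and $yd\ge 1$, we get $kyd=2^{n}-y^{2}-d^{2}\le 2^{n}-2$, whence $k\le 2^{n}-2$. Taking contrapositives, $k>2^{n}-2$ rules out any positive solution when $n$ is odd (part (i)) and any positive odd solution when $n$ is even (part (ii)). The extremal case $y=d=1$, giving $k=2^{n}-2$ and the solution $(x,y)=(2^{n}-1,1)$, shows the bound is sharp.

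For the parity statements I would argue modulo $2$. In case (ii), if $x,y$ are both odd then reducing (\ref{eq2}) mod $2$ gives $x^{2}+y^{2}-kxy\equiv 1+1-k\equiv k\equiv 0\pmod 2$ (since $2^{n}$ is even for $n\ge 2$), so $k$ is even directly. Case (i) needs one extra step because $x,y$ are not assumed odd: writing $a=v_{2}(x)$, $b=v_{2}(y)$ with $a\le b$ and factoring out $2^{2a}$, the case $b>a$ would make the $2$-adic valuation of the left side equal to the even number $2a$, contradicting $v_{2}(2^{n})=n$ odd; hence $a=b$, and dividing through by $2^{2a}$ produces odd $x_{0},y_{0}$ with $x_{0}^{2}-kx_{0}y_{0}+y_{0}^{2}=2^{\,n-2a}$ where $n-2a\ge 1$ is odd, so the same mod $2$ computation again forces $k$ even.

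The main obstacle, and the step I would be most careful about, is the descent bookkeeping in case (ii): Vieta jumping stays inside the class of odd solutions only once $k$ is known to be even (if $k$ were odd, $x'=ky-x$ would flip the parity), so I must first extract the evenness of $k$ from the mod $2$ computation and only then invoke minimality to force $d\ge 1$. Organizing the argument as ``first pin down the parity of $k$, then run the descent'' keeps the two solution classes from getting tangled; the remaining verifications (that $x'\le 0$, that $2^{n}$ is not an admissible square, and the elementary inequality $y^{2}+d^{2}\ge 2$) are routine.
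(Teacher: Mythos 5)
Your argument is correct, and it reaches Conjecture 1 by a genuinely different route than the paper. The paper's proof, like yours, first pins down the parity of $k$ (by iterated halving of an even solution, which is your $2$-adic valuation argument in different clothing), but for the bound it completes the square: with $k$ even it sets $u=\left\vert x-\frac{k}{2}y\right\vert$, $v=y$, $d=\frac{k^{2}}{4}-1$, so that (\ref{eq2}) becomes $u^{2}-dv^{2}=2^{n}$, and then it invokes Nagell's inequality (Lemma 2.1) for the fundamental solution of a class of this equation relative to the Pell equation $X^{2}-dY^{2}=1$, whose fundamental solution is $\frac{k}{2}+\sqrt{d}$; after ruling out $v=0$ (which would make $2^{n}$ a square, or produce an all-even class in case (ii)), the inequality $1\leq v\leq\sqrt{2^{n}/(k+2)}$ gives $k\leq 2^{n}-2$. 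Your Vieta-jumping descent replaces this Pell-class machinery with the elementary observation that a sum-minimal admissible solution must have nonpositive conjugate root $ky-x$, so that $x=ky+d$ with $d\geq 1$ and $y^{2}+kyd+d^{2}=2^{n}$, from which the bound is immediate. The two proofs are close relatives --- Nagell's inequalities are themselves established by choosing a minimal representative in each class, which is what your descent does by hand --- but yours is self-contained (no appeal to the cited lemmas), handles the degeneracies $x'=0$ and $v=0$ in one stroke, and makes sharpness transparent: $y=d=1$ forces $k=2^{n}-2$ with solution $(2^{n}-1,1)$, the very example the paper only quotes from the literature in its closing remark. Your caution about the order of steps in case (ii) --- establish $k$ even \emph{before} jumping, since otherwise $ky-x$ need not be odd --- is exactly the right point, and the paper needs the same precaution (its substitution $u=\left\vert x-\frac{k}{2}y\right\vert$ only makes sense once $k$ is even). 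One shared, harmless convention: both proofs implicitly take $k\geq 1$ (the paper's ``$k\neq 0$, then $k\geq 2$'') and, in part (ii), $n\geq 2$; your final inequality $k\leq kyd\leq 2^{n}-2$ uses exactly this.
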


\noindent In this paper, among other results, we will prove Conjecture 1 in Theorem 3.1, and  prove in Theorem 3.2 a result analogous to Conjecture 1.

\section{Preliminary results}

In this section, we will recall some results that we will need for the proof
of our theorems.

Let $d$ be a positive integer wich is not a perfect square, and consider the
Pell equation%
\begin{equation}\label{eq4}
x^{2}-dy^{2}=1.
\end{equation}%
It is well known that equation (\ref{eq4}) always has a positive solution when  $%
d\geq 2.$ The least positive integer solution  $x_{1}+y_{1}\sqrt{d}$ to
equation (\ref{eq4}) is called the fundamental solution, and all positive integer
solutions to (\ref{eq4}) are given by
\begin{equation*}
x_{n}+y_{n}\sqrt{d}=\left( x_{1}+y_{1}\sqrt{d}\right) ^{n},\quad \text{with}
\quad n\geq 1.
\end{equation*}
Let $C$ be a nonzero integer, and consider the Diophantine equation

\begin{equation}\label{eq5}
    u^{2}-dv^{2}=C.
\end{equation}

\noindent Suppose that $u+v\sqrt{d}$ is a solution to equation (\ref{eq5}). If $x+y\sqrt{d}$ is any solution of equation (\ref{eq4}), then
\[
\begin{array}{cc}
u^{\prime }+v^{\prime }\sqrt{d} & =\left( u+v\sqrt{d}\right) \left( x+y\sqrt{%
d}\right)  \\
& =ux+vyd+\left( yu+vx\right) \sqrt{d}%
\end{array}%
\]%
is also a solution of (\ref{eq5}).  The solution  $u^{\prime }+v^{\prime }\sqrt{d}$
is said to be associated with the solution $u+v\sqrt{d}$. The set of all
solutions associated with each other form a class of solutions of equation
(\ref{eq5}). Every class contains an infinity of solutions. We have the following lemmas.

\begin{lemma}
If $u+v\sqrt{d}$ is the fundamental solution of a class $K$ of the equation%
\begin{equation*}
u^{2}-dv^{2}=N,
\end{equation*}%
where $N$ is a positive integer and if $x_{1}+y_{1}\sqrt{d}$ is the
fundamental solution of equation (\ref{eq4}), then we have the inequalities%
\[
0 \leq v\leq \dfrac{y_{1}}{\sqrt{2\left( x_{1}+1\right) }}\sqrt{N},
\]%
and%
\[
0<\left\vert u\right\vert \leq \sqrt{\frac{1}{2}\left( x_{1}+1\right) N}.
\]
\end{lemma}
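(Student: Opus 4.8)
This is a classical bound of Nagell, and the plan is to exploit the minimality built into the definition of the fundamental solution of the class together with a single multiplication by the fundamental unit of the Pell equation (\ref{eq4}). Write $\alpha = u + v\sqrt d$ for the fundamental solution of the class $K$, normalised so that $v \ge 0$ is as small as possible. I may assume $v > 0$, since for $v = 0$ both asserted inequalities are immediate from $u^{2} = N$ and $x_{1} > 1$. The entire argument then reduces to establishing the single inequality
\[
|u|\,y_{1} \ge v\,(x_{1} + 1),
\]
from which, as I explain below, both displayed bounds follow by elementary algebra using the Pell relation $x_{1}^{2} - d\,y_{1}^{2} = 1$.

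To obtain this inequality I would form the associated solution $\gamma = (u + v\sqrt d)(x_{1} - y_{1}\sqrt d)$ when $u > 0$, and $\gamma = (u + v\sqrt d)(x_{1} + y_{1}\sqrt d)$ when $u < 0$; in either case $\gamma$ lies in $K$ and its coefficient of $\sqrt d$ equals $w := v x_{1} - |u|\,y_{1}$. If $w \ge 0$, then since $\gamma \in K$ the minimality of $v$ forces $w \ge v$, i.e. $v(x_{1}-1) \ge |u|\,y_{1}$; but $N>0$ gives $|u| > v\sqrt d$, whence $x_{1}-1 > \sqrt d\, y_{1}$, contradicting $(x_{1}-1)^{2} < (x_{1}-1)(x_{1}+1) = d\,y_{1}^{2}$. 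Hence $w < 0$. Now $-\gamma$ also lies in $K$, with non-negative coefficient $-w$ of $\sqrt d$, so minimality gives $-w \ge v$, that is $|u|\,y_{1} - v x_{1} \ge v$, which is exactly the claimed inequality. I expect this minimality/sign bookkeeping to be the only genuinely delicate point — in particular the fact that $-\gamma$ belongs to the same class $K$, so that the minimality of $v$ may legitimately be applied to $-w$; everything after this is routine computation.

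With the key inequality in hand, squaring gives $u^{2} y_{1}^{2} \ge v^{2}(x_{1}+1)^{2}$; substituting $u^{2} = N + d v^{2}$ and $d\,y_{1}^{2} = (x_{1}-1)(x_{1}+1)$ and simplifying yields $N y_{1}^{2} \ge 2 v^{2}(x_{1}+1)$, that is
\[
v \le \frac{y_{1}}{\sqrt{2(x_{1}+1)}}\sqrt N,
\]
which together with $v \ge 0$ is the first assertion. For the second I feed this back into $u^{2} = N + d v^{2}$: using $d\,y_{1}^{2} = (x_{1}-1)(x_{1}+1)$ once more gives $u^{2} \le N + \tfrac12 N(x_{1}-1) = \tfrac12 (x_{1}+1) N$, so $|u| \le \sqrt{\tfrac12 (x_{1}+1) N}$; and $|u| \ge \sqrt N > 0$ since $N>0$, which completes the proof.
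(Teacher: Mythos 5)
Your proof is correct: the reduction to the key inequality $|u|\,y_{1}\ge v(x_{1}+1)$, the sign/minimality bookkeeping (including the point that $-\gamma$ lies in $K$, which holds because $-1=-1+0\sqrt{d}$ is itself a solution of the Pell equation, so negation preserves the class), and the concluding algebra all check out. Note that the paper does not actually prove this lemma — it refers to Nagell's book — and your argument is precisely the classical one given there (multiply by the fundamental Pell unit, invoke minimality of $v$ within the class, then substitute $u^{2}=N+dv^{2}$ and $dy_{1}^{2}=(x_{1}-1)(x_{1}+1)$), so you have essentially reconstructed the referenced proof.
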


\begin{lemma}
If $u+v\sqrt{d}$ is the fundamental solution of a class $K$ of the equation $%
u^{2}-dv^{2}=-N,$ where $N$ is a positive integer and if $x_{1}+y_{1}\sqrt{d}
$ is the fundamental solution of equation (\ref{eq4}), we have the inequalities%
\[
0 < v \leq \dfrac{y_{1}}{\sqrt{2\left( x_{1}-1\right) }}\sqrt{N},
\]%
and%
\[
0 \leq \left\vert u\right\vert \leq \sqrt{\frac{1}{2}\left( x_{1}-1\right) N}.
\]
\end{lemma}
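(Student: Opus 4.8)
The plan is to exploit the fact that the fundamental solution $\alpha=u+v\sqrt{d}$ of the class $K$ is, by definition, the solution in $K$ with the least non-negative value of $v$, and to compare it with its two neighbours obtained by multiplying by the fundamental unit $\varepsilon=x_{1}+y_{1}\sqrt{d}$ of (\ref{eq4}) and by its inverse $\varepsilon^{-1}=x_{1}-y_{1}\sqrt{d}$. Since multiplying a solution of $u^{2}-dv^{2}=-N$ by a solution of (\ref{eq4}) again yields a solution of the same class (as set up before (\ref{eq5})), both $\alpha\varepsilon$ and $\alpha\varepsilon^{-1}$ lie in $K$, so their $\sqrt{d}$-coordinates cannot be smaller than $v$.

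First I would expand $\alpha\varepsilon=(ux_{1}+vy_{1}d)+(uy_{1}+vx_{1})\sqrt{d}$ and $\alpha\varepsilon^{-1}=(ux_{1}-vy_{1}d)+(vx_{1}-uy_{1})\sqrt{d}$, reading off the two $\sqrt{d}$-coordinates $vx_{1}+uy_{1}$ and $vx_{1}-uy_{1}$. Minimality of $v$ in the class forces $vx_{1}+uy_{1}\ge v$ and $vx_{1}-uy_{1}\ge v$. Rewriting these as $uy_{1}\ge -v(x_{1}-1)$ and $uy_{1}\le v(x_{1}-1)$ and combining them (using $v\ge 0$ and $y_{1}>0$) yields the single clean inequality $|u|\,y_{1}\le v(x_{1}-1)$, which is the heart of the argument.

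From here the bounds follow by routine algebra. Squaring gives $u^{2}y_{1}^{2}\le v^{2}(x_{1}-1)^{2}$; substituting $u^{2}=dv^{2}-N$ and then the Pell identity $dy_{1}^{2}=x_{1}^{2}-1=(x_{1}-1)(x_{1}+1)$ collapses the coefficient of $v^{2}$ to $dy_{1}^{2}-(x_{1}-1)^{2}=2(x_{1}-1)$, so that $2(x_{1}-1)v^{2}\le Ny_{1}^{2}$, which is exactly the asserted bound $v\le \frac{y_{1}}{\sqrt{2(x_{1}-1)}}\sqrt{N}$. Feeding this back into $u^{2}=dv^{2}-N$ and simplifying with the same identity gives $u^{2}\le \frac{1}{2}(x_{1}-1)N$, the second bound. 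The lower bounds are immediate: $v=0$ would force $u^{2}=-N<0$, so $v>0$, while $|u|\ge 0$ is trivial.

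The step needing the most care is the justification of the two minimality inequalities $vx_{1}\pm uy_{1}\ge v$. What is really true is that along the bi-infinite chain of associated solutions $\alpha\varepsilon^{n}$ the $\sqrt{d}$-coordinate tends to $+\infty$ in both directions (since the product of a solution with its conjugate equals $-N<0$), so the chain has a genuine minimum, attained at the fundamental solution; the inequalities simply express that $\alpha$ sits at this minimum relative to its two neighbours. I would therefore first establish this limiting behaviour and pin down the sign conventions defining the fundamental solution (including the ambiguous-class case), after which the comparison with $\alpha\varepsilon^{\pm1}$ is legitimate. This mirrors the proof of Lemma 2.1, the only difference being that there $N>0$ produces the factor $x_{1}+1$ in place of $x_{1}-1$.
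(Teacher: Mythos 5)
Your proposal is correct, but note that the paper itself gives no proof of this lemma at all: it simply cites Nagell's \emph{Introduction to Number Theory} for Lemmas 2.1 and 2.2. What you have written is, in essence, a correct reconstruction of the classical argument found in that reference: compare the fundamental solution $\alpha$ with its neighbours $\alpha\varepsilon^{\pm 1}$, extract $|u|\,y_{1}\le v(x_{1}-1)$, and finish with the Pell identity $dy_{1}^{2}=x_{1}^{2}-1$. Your algebra checks out (the coefficient $dy_{1}^{2}-(x_{1}-1)^{2}$ does collapse to $2(x_{1}-1)$, and back-substitution gives $u^{2}\le \frac{1}{2}(x_{1}-1)N$), and you correctly identified the one genuinely delicate point: the neighbour comparison $vx_{1}\pm uy_{1}\ge v$ is only legitimate once one knows that $\alpha\varepsilon^{\pm 1}$ again have \emph{positive} $\sqrt{d}$-coordinate, since minimality of $v$ is only among solutions of the class with $v>0$. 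Your justification is the right one: because $\alpha\bar{\alpha}=-N<0$, the sign of a solution $\alpha$ (as a real number) agrees with the sign of its $v$-coordinate, so the chain $\{\alpha\varepsilon^{n}\}$ with $\alpha>0$ consists entirely of solutions with $v>0$ and its $v$-coordinates tend to $+\infty$ in both directions, whence the minimum exists and is attained at the fundamental solution. Filling in that observation explicitly turns your sketch into a complete proof.
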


For the proof of Lemma 2.1 and Lemma 2.2, see \cite{Na}. 

\section{New results}

In this section, we will prove Conjecture 1 in Theorem 3.1, and in Theorem 3.2 a
result that is analogous to Conjecture 1 for the Diophantine equation
$$
x^{2}-kxy+y^{2}= - 2^{n}.
$$
\noindent If $k = 0$, then equation (\ref{eq2}) has finitely many solutions and equation (\ref{eq3}) has no solution. We suppose in the sequel that $k \neq 0$.
\begin{theorem}
Conjecture 1 is true
\end{theorem}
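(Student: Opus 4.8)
The plan is to split Conjecture 1 into a \emph{size} statement (the hypothesis $k>2^{n}-2$ precludes the relevant solutions) and a \emph{parity} statement, treating parts (i) and (ii) in parallel. Throughout I may take $k\geq 3$: in the size statement $k>2^{n}-2$ already forces $k\geq 3$ (since $2^{n}-2\geq 6$ in the ranges at issue), while the residual values $k\in\{1,2\}$ satisfy $k\leq 2^{n}-2$ and are settled by direct inspection (for $k=2$ the form is $(x-y)^{2}$). The engine for the size statement is Vieta jumping on $x^{2}-kxy+y^{2}=2^{n}$ regarded as a quadratic in $x$: if $(x,y)$ is a solution with $x\geq y\geq 1$, then $x'=ky-x=(y^{2}-2^{n})/x$ gives a second solution $(x',y)$.

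First I would take a positive solution minimizing $x+y$, with $x\geq y$ by symmetry. Since $x'\geq x$ would give $y^{2}-2^{n}\geq x^{2}\geq y^{2}$, an impossibility, minimality forces $x'\leq 0$, i.e.\ $x\geq ky$ and hence $y^{2}\leq 2^{n}$. In part (i), $n$ is odd, so $2^{n}$ is not a square and $y^{2}<2^{n}$; thus $x-ky\geq 1$ and $x=ky+(x-ky)\geq k+1$. The factorization
\[
x(x-ky)=2^{n}-y^{2}
\]
then yields $k+1\leq x(x-ky)=2^{n}-y^{2}\leq 2^{n}-1$, whence $k\leq 2^{n}-2$. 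This is precisely the contrapositive of the first assertion of (i). For the parity assertion of (i) I would run a $2$-adic descent: if $k$ is odd, reducing $x^{2}-kxy+y^{2}=2^{m}$ modulo $2$ gives $x^{2}+xy+y^{2}\equiv 0$, which forces $x\equiv y\equiv 0\pmod 2$; writing $x=2x_{1},\ y=2y_{1}$ produces a solution of the same shape with $m$ replaced by $m-2$. Starting from odd $n$ this reaches $m=1$, where $x^{2}-kxy+y^{2}=2$ again forces $x,y$ even and a left side divisible by $4$, a contradiction. Hence no solution exists for odd $k$, which implies the parity statement.

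Part (ii), for even $n\geq 2$, is handled the same way once attention is restricted to odd $x,y$ (the small even exponents lie in the already-verified range $0\leq n\leq 10$). If $k$ is odd then $x^{2}-kxy+y^{2}$ is odd while $2^{n}$ is even, so no odd solution exists at all, disposing of both assertions in the odd-$k$ case. If $k$ is even, Vieta jumping preserves the oddness of $x$ (as $x'=ky-x\equiv -x\pmod 2$), so the minimal odd solution again satisfies $x\geq ky$; since $y$ is odd and $n$ is even, $y^{2}$ is odd and $2^{n}$ is even, forcing the strict inequality $y^{2}<2^{n}$, and the same factorization gives $k\leq 2^{n}-2$.

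The main obstacle is extracting the sharp constant $2^{n}-2$ rather than the easier $2^{n}-1$: this rests on the \emph{strict} inequality $y^{2}<2^{n}$ together with the estimate $x\geq k+1$, and it is exactly where the hypotheses ``$n$ odd'' in (i) and ``$x,y$ odd'' in (ii) become indispensable. Finally, to align with the preliminaries I would record that the substitution $u=2x-ky,\ v=y$ converts (\ref{eq2}) into the Pell-type equation $u^{2}-(k^{2}-4)v^{2}=2^{n+2}$, whose associated Pell equation $X^{2}-(k^{2}-4)Y^{2}=1$ has fundamental solution $\big(\tfrac{k^{2}-2}{2},\tfrac{k}{2}\big)$ for $k$ even and $\big(\tfrac{k^{3}-3k}{2},\tfrac{k^{2}-1}{2}\big)$ for $k$ odd. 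Feeding these into Lemma 2.1 bounds the fundamental solution $(u_{0},v_{0})$ of every class by $0\leq v_{0}\leq 2^{n/2}$ when $k$ is even, giving an independent, solution-size route to $y\leq 2^{n/2}$ that dovetails with the factorization estimate above.
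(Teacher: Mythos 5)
Your argument is correct, and it reaches both halves of the conjecture by a route that is genuinely different from the paper's in the crucial step. On the parity half the two proofs essentially coincide: your $2$-adic descent (odd $k$ forces $x\equiv y\equiv 0 \bmod 2$, divide by $4$, descend until the impossible $x^{2}-kxy+y^{2}=2$) is the contrapositive of the paper's descent, which cancels powers of $2$ until $X,Y$ are odd and reads off that $k$ is even. The difference is in the size bound $k\le 2^{n}-2$. The paper first uses the parity step to know $k$ is even, completes the square via $u=\left|x-\frac{k}{2}y\right|$, $v=y$, $d=\frac{k^{2}}{4}-1$, notes that $\frac{k}{2}+\sqrt{d}$ is the fundamental solution of $X^{2}-dY^{2}=1$, and invokes Nagell's bound (Lemma 2.1) on the fundamental solution of a class of $u^{2}-dv^{2}=2^{n}$; after ruling out $v=0$ (impossible for $n$ odd, and producing only even solutions for $n$ even), the bound $1\le \sqrt{2^{n}}/\sqrt{k+2}$ gives $k\le 2^{n}-2$. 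You instead run Vieta jumping on a minimal positive (resp.\ minimal positive odd) solution: minimality forces the conjugate root $x'=ky-x$ to be nonpositive, hence $y^{2}\le 2^{n}$; the hypotheses ``$n$ odd'' (resp.\ ``$y$ odd, $n$ even'') upgrade this to a strict inequality, and the factorization $x(x-ky)=2^{n}-y^{2}$ together with $x\ge k+1$ delivers exactly $k\le 2^{n}-2$. Your route is elementary and self-contained: it needs no theory of classes of solutions of Pell-type equations, and it bounds $k$ without first knowing $k$ is even (the jump preserving oddness of $x$ when $k$ is even is the only interaction needed). What the paper's route buys is the direct connection to the classical machinery and to the sharpness examples $k=2^{n}\mp 2$ of Remark 3.1.

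One caveat about your closing remark, though it does not affect the proof since your main argument never uses it: Lemma 2.1 bounds only the \emph{fundamental} solution of each class, not every solution, so it cannot give $y\le 2^{n/2}$ for all solutions; moreover, with your substitution $u=2x-ky$ the associated Pell equation has $y_{1}=\frac{k}{2}$, and the lemma's inequality then degenerates to the trivial $k^{2}\le k^{2}2^{n}$, which is precisely why the paper insists on the normalization $d=\frac{k^{2}}{4}-1$, where $y_{1}=1$. So that remark is not an ``independent route'' to anything; the Vieta argument is doing all the work, and it does it completely.
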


\begin{proof}

(i) Let $n > 2$ be an odd integer. If $(x,y)$ is a positive
solution of equation (\ref{eq2}), then clearly $x$ and $y$ have the same parity. If $x$ and $y$ are odd, then $k$ is even. Suppose now that $2 \mid x$, then $2 \mid y$ and vice versa, and let $x=2X$ and $y=2Y.$ Equation
(\ref{eq2}) yields 
\begin{equation}\label{eq6}
X^{2}-kXY+Y^{2}=2^{n-2}.
\end{equation}%
Again, if $X$ is even in (\ref{eq6}), then $Y$ is even and since $n - 2$ is odd, we repeat the same process untill all powers of 2 in $x$ and $y$ have been canceled. Therefore, we endup having the following equation $$X^{2}-kXY+Y^{2}=2^r ,$$ where $X$, $Y$ and $r$ are positive odd integers. Hence $k$ is clearly even. After a change of variables, equation (\ref{eq2}) with $n$ odd yields
\begin{equation}\label{eq7}
u^{2}-dv^{2}= 2^n;
\end{equation}

\noindent where $u=\left\vert x-\frac{k}{2}y\right\vert ,y=v$ and $d=\frac{k^{2}}{4}-1$. \\

\noindent Since $k$ is even, then $u$ and $v$ are positive integers. If $k = 2$, equation (\ref{eq7}) implies that $u^2 = 2^n$, which is impossible. Hence, $k > 2$, whereupon $d > 1$. The solution $\frac{k}{2}+\sqrt{\frac{k^{2}}{4}-1}$ is the fundamental solution to the Diophantine equation
\[
x^{2}-dy^{2}=1,\quad \text{where\quad }d=\frac{k^{2}}{4}-1.
\] 
If equation (\ref{eq2}) has a positive solution with $n$ an odd positive integer,
then equation (\ref{eq7}) has a positive solution. If $u+v\sqrt{d}$ is the
fundamental solution of a class $K$ of equation (\ref{eq7}), then Lemma 2.1 implies
that
\[
0\leq v\leq \frac{1}{\sqrt{2\left( \frac{k}{2}+1\right) }}\sqrt{2^{n}}.
\] If $v = 0$, then equation (\ref{eq7}) yields $u^{2} = 2^{n}$, which is impossible. Therefore, $v \geq 1$ and the inequality above implies that  $\sqrt{k+2}\leq \sqrt{2^{n}},$ i.e. $k\leq 2^{n}-2.$\\ 

\noindent (ii) Let $n$ be a positive even integer, and suppose that $(x,y)$ is a solution to
$x^{2}-kxy+y^{2}=2^{n}$. If $x$ and $y$ are odd, then clearly $k$ is even. Hence equation (\ref{eq2}) yields $u^{2}-dv^{2}=2^{n},$ where $
u=\left\vert x-\frac{k}{2}y\right\vert ,v=y$ and $d=\frac{k^{2}}{4}-1$. Since $
k\neq 0$, then $k\geq 2,$ and $d$ is a non negative integer. Lemma 2.1 implies that
\[
0\leq v\leq \frac{1}{\sqrt{2\left( \frac{k}{2}+1\right) }}\sqrt{2^{n}}.
\]
The solution $\frac{k}{2}+\sqrt{\frac{k^{2}}{4}-1}$ is the fundamental solution
to $x^{2}-dy^{2}=1,$ where $d=\frac{k^{2}}{4}-1.$ If $v = 0$, then equation (\ref{eq7}) yields $u = 2^{n/2}$ and all solutions in the same classe as $(2^{n/2} , 0)$ are even. Hence, we suppose $v \geq 1$, and the last inequality implies that  $\sqrt{2\left(\frac{k}{2}+1\right) }\leq \sqrt{2^{n}},$ i.e. $k\leq 2^{n}-2$.  Therefore, if $n$ is
even and $k>2^{n}-2$, equation (\ref{eq2}) has no positive odd solution and if  $k\leq 2^{n}-2$ and equation (\ref{eq2}) has a  positive odd
integer solution, then $k$ is even.
\end{proof}

\begin{theorem}

(i) Let $n$ be an odd integer and $n>2.$ If $k>2^{n}+2,$ then the equation $x^{2}-kxy+y^{2}=2^{n}$ has no positive integer solution. If $k\leq 2^{n}+2,$ and the equation $x^{2}-kxy+y^{2}= - 2^{n}$ has a solution, then $k$ is even.\\ 

\noindent (ii) Let $n$ be an nonzero even integer. If $k>2^{n}+2,$ then the equation $%
x^{2}-kxy+y^{2}= - 2^{n}$ has no positive odd integer solution. If $k\leq
2^{n}+2$ and the equation $x^{2}-kxy+y^{2}= - 2^{n}$ has a positive odd integer
solution, then $k$ is even and $2$ divide exactly $k$.
\end{theorem}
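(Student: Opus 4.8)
The plan is to mirror the argument used for Theorem 3.1, replacing the equation $x^{2}-kxy+y^{2}=2^{n}$ by $x^{2}-kxy+y^{2}=-2^{n}$ throughout and invoking Lemma 2.2 in place of Lemma 2.1; the switch from $x_{1}+1$ to $x_{1}-1$ in the denominator of the bound on $v$ is exactly what turns the threshold $2^{n}-2$ into $2^{n}+2$. Before anything else I would record that a positive solution forces $k>2$: if $k\leq 0$ then $-kxy\geq 0$ and the left side is $\geq x^{2}+y^{2}>0>-2^{n}$, while $k=2$ gives $(x-y)^{2}=-2^{n}<0$; both are impossible. Hence $k\geq 3$ in every case, so $d=\frac{k^{2}}{4}-1>1$ is not a perfect square (being one less than a square), and $\frac{k}{2}+\sqrt{d}$ is the fundamental solution of $x^{2}-dy^{2}=1$, exactly as in Theorem 3.1.

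For part (i) I would first settle the parity claim by the same descent: for a positive solution $(x,y)$, if $x$ and $y$ have opposite parity the left side is odd while $-2^{n}$ is even, so $x\equiv y\pmod 2$; if both are even I substitute $x=2X$, $y=2Y$ to pass to $X^{2}-kXY+Y^{2}=-2^{n-2}$ and repeat. Because $n$ is odd the exponent runs through odd values and the descent terminates at an equation $X^{2}-kXY+Y^{2}=-2^{r}$ with $X,Y,r$ odd, where reducing modulo $2$ gives $2-k\equiv 0$, i.e. $k$ is even. With $k$ even I then set $u=|x-\frac{k}{2}y|$, $v=y$ to obtain $u^{2}-dv^{2}=-2^{n}$; taking the fundamental solution of its class and applying Lemma 2.2 yields $0<v\leq \frac{1}{\sqrt{2(\frac{k}{2}-1)}}\sqrt{2^{n}}$. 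Since $v\geq 1$, this forces $k-2\leq 2^{n}$, i.e. $k\leq 2^{n}+2$; contrapositively, $k>2^{n}+2$ leaves no positive solution.

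For part (ii), with $n$ even and nonzero and $(x,y)$ a positive odd solution, the extra conclusion that $2$ divides $k$ exactly (that is, $k\equiv 2\pmod 4$) should come from working modulo $4$ rather than modulo $2$: since $x,y$ are odd we have $x^{2}+y^{2}\equiv 2\pmod 4$ and $xy$ is a unit modulo $4$, while $-2^{n}\equiv 0\pmod 4$ forces $kxy\equiv 2\pmod 4$, whence $k\equiv 2\pmod 4$. In particular $k$ is even, so the change of variables and Lemma 2.2 apply verbatim and again give $k\leq 2^{n}+2$, so that $k>2^{n}+2$ rules out any positive odd solution.

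The only genuinely new point beyond transcribing Theorem 3.1 is the modulo $4$ refinement in part (ii) that upgrades ``$k$ even'' to ``$2$ divides $k$ exactly'', together with keeping track that the negative right-hand side routes the estimate through Lemma 2.2 (hence $x_{1}-1$) instead of Lemma 2.1. I would also note that the case $v=0$, which had to be excluded separately in the $+2^{n}$ analysis, cannot occur here because Lemma 2.2 already guarantees $v>0$; this is a mild simplification rather than an obstacle.
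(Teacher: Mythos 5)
Your proposal is correct and follows essentially the same route as the paper: the parity descent and change of variables from Theorem 3.1, the bound via Lemma 2.2 (with $x_{1}-1$ in place of $x_{1}+1$, yielding $k\leq 2^{n}+2$), and the modulo $4$ argument to get $k\equiv 2\pmod 4$ in part (ii). In fact you spell out details (ruling out $k\leq 2$, the termination of the descent, the exactness of the divisibility by $2$) that the paper only gestures at by citing ``the same method as in Theorem 3.1.''
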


\begin{proof}

(i) let $n$ be a positive odd integer and $n>2$. Using the same
reasoning as in the proof of Theorem 3.1, without loss of
generality, we can suppose that the solutions $x$ and $y$ to (\ref{eq1}) are odd. Hence $k$ is even. Again, the same method in the proof of Theorem 3.1 and Lemma 2.2
imply that
\[
1\leq v\leq \frac{1}{\sqrt{2\left( \frac{k}{2}-1\right) }}\sqrt{2^{n}}
\]%
whereupon, $\sqrt{k-2}\leq \sqrt{2^{n}}$ i.e. $k\leq 2^{n}+2$.\\

\noindent (ii) Suppose that $n$ is even and that the equation $%
x^{2}-kxy+y^{2}= - 2^{n}$ has a positive integer solution, then clearly $k$ is
even because $n\geq 1,$ (the case $n=0$ has been settled in \cite{K-K-SS}). Again the
same method in the proof of Theorem 3.1 and Lemma 2.2 imply that $k\leq 2^{n}+2$ and $k$ even. If $(x,y)$ is an  odd solution to equation (\ref{eq1}), then taking  $
x^{2}-kxy+y^{2}=2^{n}$ modulo 4 implies that 2 divide exactly $k$. 
\end{proof}

\begin{remark}
It was proved in \cite{K-K-S}, that the diophantine equation $x^{2}-kxy+y^{2}=2^{n}$
with $k=2^{n}-2$ has infinitly many solutions and in \cite{K-K-SS}, that the
Diophantine equation $x^{2}-kxy+y^{2}=-2^{n},$ with $k=2^{n}+2$ has
infinitly many solutions. Hence the bounds of $k$ in Theorem 3.1 and Theorem 3.2
are sharp.
\end{remark}
\begin{theorem}
(i) Let $n>2$ is an odd integer and $p$ a prime such that $\left(
\frac{2}{p}\right) =-1$. If equation (\ref{eq2}) has a positive solution, then $
\frac{k}{2} \not\equiv \pm 1\mod p.$ In particular $k$ is a multiple of $3.$ \\ 

\noindent (ii) Let $n>2$  an odd integer, and $p$ a prime such that $\left( \frac{%
2}{p}\right) =1$. If equation (\ref{eq3}) has a positive solution, then $\frac{k}{2}%
\not\equiv \pm 1\mod p.$ 
\end{theorem}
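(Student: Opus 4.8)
The plan is to recycle the substitution used in the proof of Theorem 3.1 and then work modulo $p$. A positive solution of (\ref{eq2}) forces $k$ even by Theorem 3.1, and a positive solution of (\ref{eq3}) forces $k$ even by Theorem 3.2, so $\frac k2\in\mathbb Z$ and the congruences in the statement are meaningful. Setting $u=\left\vert x-\frac k2 y\right\vert$, $v=y$ and $d=\frac{k^2}{4}-1$ converts (\ref{eq2}) into the integer identity $u^2-dv^2=2^n$ and (\ref{eq3}) into $u^2-dv^2=-2^n$. In contrast with Theorem 3.1, the fundamental-solution bounds of Lemma 2.1 and Lemma 2.2 are not needed here: everything will come from reducing these two identities modulo $p$.

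The device driving the proof is the factorization $d=\left(\frac k2\right)^2-1$. Arguing by contradiction, suppose $\frac k2\equiv\pm1\pmod p$; then $d\equiv0\pmod p$, so reduction modulo $p$ collapses the identity to $u^2\equiv2^n\pmod p$ in case (i) and to $u^2\equiv-2^n\pmod p$ in case (ii). It then suffices to show the right-hand side is a quadratic non-residue. Because $n$ is odd, $2^n=2\,(2^{(n-1)/2})^2$, so $\left(\frac{2^n}{p}\right)=\left(\frac 2p\right)$ and $\left(\frac{-2^n}{p}\right)=\left(\frac{-1}{p}\right)\left(\frac 2p\right)=\left(\frac{-2}{p}\right)$. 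In case (i) the hypothesis $\left(\frac 2p\right)=-1$ makes $2^n$ a non-residue, contradicting $u^2\equiv2^n\pmod p$; hence $\frac k2\not\equiv\pm1\pmod p$. In case (ii) the same reduction shows $u^2\equiv-2^n\pmod p$ is insoluble exactly when $-2^n$ is a non-residue, i.e. $\left(\frac{-2}{p}\right)=-1$, and this again yields $\frac k2\not\equiv\pm1\pmod p$.

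For the final claim that $3\mid k$, I would specialize case (i) to $p=3$. As $3\equiv3\pmod 8$, one has $\left(\frac 23\right)=-1$, so (i) applies and excludes both $\frac k2\equiv1$ and $\frac k2\equiv-1\pmod 3$. The only remaining class is $\frac k2\equiv0\pmod 3$, that is $3\mid\frac k2$, and therefore $3\mid k$.

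The argument is short, so the delicate points are matters of bookkeeping rather than a single hard step. The first is to invoke Theorems 3.1 and 3.2 correctly so as to know $k$ is even, since otherwise $u$ need not be an integer and $\frac k2\bmod p$ is undefined. The second, and the one I would watch most closely, is the sign in case (ii): the operative symbol there is $\left(\frac{-2}{p}\right)$ rather than $\left(\frac 2p\right)$, and one must ensure the hypothesis on $p$ indeed delivers $-2^n$ as a non-residue before the contradiction can be drawn.
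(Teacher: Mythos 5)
Your part (i) is correct and is essentially the paper's own argument: Theorem 3.1 gives $k$ even, the substitution $u=\left\vert x-\frac{k}{2}y\right\vert$, $v=y$, $d=\frac{k^{2}}{4}-1$ turns (\ref{eq2}) into $u^{2}-dv^{2}=2^{n}$, and if $\frac{k}{2}\equiv \pm 1 \pmod p$ then $d\equiv 0$ and $u^{2}\equiv 2^{n}\pmod p$, forcing $\left(\frac{2}{p}\right)=\left(\frac{2^{n}}{p}\right)=1$ since $n$ is odd and $p\nmid 2^{n}$; this contradicts $\left(\frac{2}{p}\right)=-1$, and $p=3$ then yields $3\mid k$. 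The paper compresses all of this into one sentence, but it is the same reduction.

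Part (ii), however, contains a genuine gap, and it sits exactly at the point you said you would ``watch most closely'' but then left unresolved. Your reduction shows that a positive solution of (\ref{eq3}) with $\frac{k}{2}\equiv\pm 1\pmod p$ forces $u^{2}\equiv -2^{n}\pmod p$, hence $\left(\frac{-2}{p}\right)=1$; so your contradiction needs $\left(\frac{-2}{p}\right)=-1$, i.e. $p\equiv 5,7\pmod 8$. The stated hypothesis is $\left(\frac{2}{p}\right)=1$, i.e. $p\equiv \pm 1\pmod 8$, and for $p\equiv 1\pmod 8$ one has $\left(\frac{-2}{p}\right)=+1$, so no contradiction arises. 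This gap cannot be closed, because the assertion itself fails for such $p$: take the prime $p=3033169\equiv 1\pmod 8$, which divides $2^{29}+1$ (indeed $2^{29}+1=3\cdot 59\cdot 3033169$, and primality of the last factor is checked by trial division over the only admissible candidates, namely primes $\equiv 1 \pmod{58}$ and $\equiv 1,3\pmod 8$ below $\sqrt{3033169}$). With $n=31$ and $k=2^{31}+2$, the pair $(x,y)=(1,1)$ is a positive solution of $x^{2}-kxy+y^{2}=-2^{31}$, we have $\left(\frac{2}{p}\right)=1$, and yet $\frac{k}{2}=2^{30}+1\equiv 2\cdot(-1)+1\equiv -1\pmod p$. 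So your argument actually proves (ii) only under the corrected hypothesis $\left(\frac{-2}{p}\right)=-1$ (equivalently, for $p\equiv 7\pmod 8$ among the primes with $\left(\frac{2}{p}\right)=1$). To be fair, the defect originates in the paper: its proof of (ii) is omitted as ``similar to (i)'', and carrying out that similar argument runs precisely into the sign issue you flagged, so what you have uncovered is an error in the statement rather than a fixable omission in your own proof.
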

\begin{proof}

(i) If $n>2$ is an odd integer and equation (\ref{eq2}) has a positive
solution, then the proof of Theorem 3.1 implies that $k$ is even and the
Diophantine equation $u^{2}-\left( \frac{k^{2}}{4}-1\right) v^{2}=2^{n}$ is
solvable. Hence if $p$ is an odd prime such that  $\left( \frac{2}{p}\right)
=-1,$ then $\frac{k}{2}\not\equiv \pm 1\mod p$. By taking $p =3$, we obtain that  $k$ is a multiple of $3.$\\

\noindent (ii) The proof of (ii) is similar to (i) and will be omitted.
\end{proof}

\begin {thebibliography}{1000}
\bibitem {K} R. Keskin,  {\it Solutions of some quadratic  Diophantine equations}, Comput. Math. Appl. {\bf 60} (2010), 2225--2230.
\bibitem {K-K-SS} R. Keskin, O. Karaatli and Z. Siar, {\it On the Diophantine Equation $x^{2}-kxy+y^{2}+2^{n}=0$,} Miskolc Math. Notes, {\bf 13} (2012), 375-388.
\bibitem {K-K-S} R. Keskin, Z. Siar and O. Karaatli, {\it On the Diophantine Equation $x^{2}-kxy+y^{2} - 2^{n}=0$,} Czechoslovak Mathematical Journal, {\bf 63} (138)(2013), 783-797.
\bibitem {M-Z} A. Marlewski and P. Zarzycki {\it Infinitely many solutions of the Diophantine equation $x^{2}-kxy+y^{2} + x =0$,} Comput. Math. Appl. {\bf 47} (2004), 115--121.
\bibitem{Na} T. Nagell, {\it  Introduction to number theory}, John Wiley \& sons,  Inc.,  New York, Stockholm, 1951.
\bibitem {Y-H} P. Yuan and Y. Hu  {\it On the Diophantine equation $x^{2}-kxy+y^{2}+lx=0$, $l \in \{1,\, 2\, 4\}$}, Comput. Math. Appl. {\bf 61} (2011), 573--577.

\end{thebibliography}
\end{document}